\newcommand{\R}{\mathbb{R}}
\newcommand{\N}{\mathbb{N}}
\newcommand{\di}{\mathop{\mbox{\rm{diad}}}\nolimits}
\newcommand{\cD}{\mathcal{D}}
\newtheorem{thm}{Theorem}
\newtheorem*{thm*}{Theorem}
\newtheorem{cor}[thm]{Corollary}
\newtheorem{cl}[thm]{Claim}
\newtheorem{obvs}{Observations}
\newtheorem{que}{Question}
\newtheorem{lem}[thm]{Lemma}
\theoremstyle{definition}
\newtheorem{Def}[thm]{Definition}
\newtheorem{open}{Open problem}
\begin{document}


\title[Dichotomy for strictly increasing bisymmetric maps]{A dichotomy result for strictly increasing bisymmetric maps}
\author{P\'al Burai, Gergely Kiss and Patricia Szokol}
\address{P\'al Burai, \newline Budapest University of Technology and Economics, \newline 1111 Budapest,
Műegyetem rkp. 3., HUNGARY}
\email{buraipl@math.bme.hu}
\address{Gergely Kiss, \newline Alfr\'ed R\'enyi Institute of Mathematics,\newline  1053 Budapest, Re\'altanoda street 13-15, HUNGARY}
\email{kiss.gergely@renyi.hu}
\address{Patricia Szokol, \newline University of Debrecen,\newline
Faculty of Informatics, University of Debrecen,
MTA-DE Research Group “Equations, Functions and Curves”, \newline
4028, Debrecen, 26 Kassai road, Hungary}
\email{szokol.patricia@inf.unideb.hu}
\keywords{Bisymmetry, quasi-arithmetic mean, reflexivity, symmetry, regularity of bisymmetric maps}
\subjclass[2010]{26E60, 39B05, 39B22, 26B99}
\maketitle


\begin{abstract}
In this paper we show some remarkable consequences of the method which proves that every bisymmetric, symmetric, reflexive, strictly monotonic binary map on a proper interval is continuous, in particular it is a quasi-arithmetic mean. Now we demonstrate that this result can be refined in the way that the symmetry condition can be weakened by assuming symmetry only for a pair of distinct points of an interval. 
\end{abstract}


\section{Introduction}

The role of bisymmetry in the characterization of binary quasi-a\-rith\-metic means goes back to the research of J\'anos Acz\'el (see \cite{Aczel1948}). This led him to a new approach, which is basically different from the earlier multivariate characterization of quasi-arithmetic means by Kolmogoroff \cite{Kolmogoroff1930}, Nagumo \cite{Nagumo1930} and de Finetti \cite{deFinetti1931}. Since that time quasi-arithmetic means became central objects in theory of functional equations, especially in the theory of means (see e.g. \cite{Burai2013c, Daroczy2013,Duc2020,Glazowska2020,Kiss2018,LPZ2020,Nagy2019,Pales2011,Pasteczka2020}, 
in particular \cite{Daroczy2002} and \cite{Jar2018} and the references therein).

In the proof of Aczél's characterization (Theorem \ref{Aczel1}, for details see \cite[Theorem 1 on p. 287]{Aczel1989}) the assumption of continuity was essentially used. It seemed that continuity cannot be omitted from the conditions of Theorem \ref{Aczel1} until quite recently the authors showed that the characterization of two-variable quasi-arithmetic means is possible without the assumption of continuity (Theorem \ref{T:bisymmetryimpliescontinuity}, for details see \cite[Theorem 8]{BKSZ2021}). It was proved that every bisymmetric, symmetric, reflexive, strictly monotonic binary mapping $F$ on a proper interval $I$ is continuous, in particular it is a quasi-arithmetic mean.

In this paper we show a nontrivial consequence of Theorem \ref{T:bisymmetryimpliescontinuity}. 
We prove a dichotomy result of bisymmetric, reflexive, strictly monotonic operations on an interval concerning symmetry (Corollary \ref{cor1}). Namely, such functions are either symmetric everywhere or nowhere symmetric.
In this sense this paper can be seen as the next step toward the characterization of bisymmetric, partially strictly monotone operations (see Open Problem \ref{op1}).


The remaining part of this paper organized as follows.
In Section \ref{S:preliminary} we introduce the basic definitions and preliminary results. Section \ref{s3} is devoted to our main result (Theorem \ref{T:bisymmetryimpliescontinuity2}) and its consequences. Here we show some illustrative examples for the strictness of our main result.
The proof of Theorem \ref{T:bisymmetryimpliescontinuity2} is a quite lengthy and technical one. Therefore, we introduce at first the needed concepts and prove some important lemmata in Section \ref{s31}, while Section \ref{s32} is devoted to the proof of Theorem \ref{T:bisymmetryimpliescontinuity2}. We finish this short note with some concluding remarks.

\section{Notations}\label{S:preliminary}

We keep the following notations throughout the text. Let $I\subseteq\R$ be a  proper interval (i.e. the interior of $I$ is nonempty) 
and $F\colon I\times I\to\R$ be a map.

Then $F$ is said to be
\begin{enumerate}[(i)]
\item {\it reflexive}, if $F(x,x)=x$ for every $x\in I$;
\item {\it partially strictly increasing}, if the functions $$x\mapsto F(x,y_0),\quad  y\mapsto F(x_0,y)$$ are strictly increasing for every fixed $x_0\in I$ and $y_0\in I$. One can define partially strictly monotone, partially monotone, partially increasing functions similarly;
\item {\it symmetric}, if $F(x,y)=F(y,x)$ for every $x,y\in I$;
\item {\it bisymmetric}, if
\begin{equation*}\label{E:bisymmetry}
 F\big(F(x,y),F(u,v)\big)=F\big(F(x,u),F(y,v)\big)
\end{equation*}
for every $x,y,u,v\in I$;
\item \emph{left / right cancellative}, if $F(x,a)=F(y,a)$ / $F(a,x)=F(a,y)$  implies $x=y$ for every $x,y,a\in I$. If $F$ is both left and right cancellative, then we simple say that $F$ is {\it cancellative}.
\item \emph{mean}, if \begin{equation*}
\min\{x,y\}\leq F(x,y)\leq\max\{x,y\}
\end{equation*} for every $x,y\in I$. $F$ is a {\it strict mean} if the previous inequalities are strict whenever $x\ne y$.
\end{enumerate}


\begin{obvs}\label{o1}
If a map $F:I^2\to I$ is partially strictly increasing, then it is cancellative.
\end{obvs}

The following fundamental result is due to Acz\'el \cite{Aczel1948} (see also \cite[Theorem 1 on p. 287]{Aczel1989}).

\begin{thm}\label{Aczel1}
A function $F:I^2\to I$ is continuous, reflexive, partially strictly monotonic, symmetric and bisymmetric  mapping if and only if there is a continuous, strictly increasing function $f$ 
that satisfies
\begin{equation}\label{eqa1}
    F(x,y)=f \left(\frac{f^{-1}(x)+f^{-1}(y)}{2}\right),\qquad x,y\in I.
\end{equation}
\end{thm}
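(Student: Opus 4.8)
The equivalence splits into an easy sufficiency and a substantial necessity. For sufficiency write $g:=f^{-1}$, so that the proposed map reads $F(x,y)=g^{-1}\!\left(\tfrac{g(x)+g(y)}{2}\right)$; reflexivity, symmetry, strict monotonicity in each variable and continuity are then immediate from the corresponding properties of the arithmetic mean together with the continuity and strict monotonicity of $g$, while bisymmetry reduces, after applying $g$ throughout, to the numerical identity $\tfrac12\!\left(\tfrac{p+q}{2}+\tfrac{r+s}{2}\right)=\tfrac12\!\left(\tfrac{p+r}{2}+\tfrac{q+s}{2}\right)$ with $p=g(x)$ and so on. Hence all the content lies in the necessity direction, which I would carry out by constructing $g=f^{-1}$ explicitly. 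A first, purely algebraic, observation is that reflexivity together with partial strict increasingness already forces $F$ to be a \emph{strict mean}: for $x<y$ one has $x=F(x,x)<F(x,y)<F(y,y)=y$ by strict monotonicity in the two variables separately, so $F(a,b)\in(a,b)$ whenever $a<b$, which keeps the construction below inside $I$.

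Fixing $a<b$ in the interior of $I$, I would build a dyadic grid by setting $x_0=a$, $x_1=b$ and, recursively, $x_{(2j+1)/2^{n+1}}=F\!\left(x_{j/2^n},x_{(j+1)/2^n}\right)$. The strict mean property makes $r\mapsto x_r$ strictly increasing on the dyadic rationals of $[0,1]$, so putting $g(x_r):=r$ defines a strictly increasing function on the grid. The arithmetic meaning of the grid is the \emph{averaging identity} $F(x_r,x_s)=x_{(r+s)/2}$, equivalently $g\!\left(F(x_r,x_s)\right)=\tfrac12\!\left(g(x_r)+g(x_s)\right)$. For \emph{adjacent} dyadic pairs this holds by the very definition of the grid, and bisymmetry is precisely the compatibility that governs its propagation: rewriting $x_r$ and $x_s$ as $F$-combinations of coarser grid points and swapping the inner pairing through the bisymmetry equation reproduces averaging relations at new arguments. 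Here symmetry plays a double role: it makes $x_{(r+s)/2}$ independent of the order of the arguments, and—this is the decisive point—it pins the weights to $\tfrac12,\tfrac12$; without it the same scheme would yield only a weighted average $g^{-1}\!\left(p\,g(x)+q\,g(y)\right)$, with $p+q=1$ being forced by reflexivity.

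The step where continuity is genuinely indispensable, and which I expect to be the main obstacle, is \emph{density of the grid in} $[a,b]$ together with the passage from adjacent pairs to all pairs. For density let $\delta_n$ be the largest gap between consecutive level-$n$ points; each refinement splits a gap $[u,v]$ into $[u,F(u,v)]$ and $[F(u,v),v]$, both strictly shorter than $v-u$ by the strict mean property, so $\delta_n$ strictly decreases to some $\delta\ge0$. If $\delta>0$ I would take, for each $n$, a level-$n$ gap $[p_n,q_n]$ whose refinement realises $\delta_{n+1}$, pass to a convergent subsequence $p_n\to p^\ast$, $q_n\to q^\ast$ with $q^\ast-p^\ast=\delta>0$, and use continuity of $F$ to obtain $\delta=F(p^\ast,q^\ast)-p^\ast$ or $\delta=q^\ast-F(p^\ast,q^\ast)$; but the strict mean property makes each of these strictly smaller than $q^\ast-p^\ast=\delta$, a contradiction. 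Hence $\delta=0$ and the grid is dense.

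With density in hand, $g$ extends uniquely to an increasing homeomorphism $g\colon[a,b]\to[0,1]$, and it remains to promote the averaging identity from adjacent grid pairs to all of $[a,b]^2$. This is the heart of the matter: for non-adjacent grid points the identity is \emph{not} a finite algebraic consequence of bisymmetry, so continuity must be used. The clean way to organise it is to note that, since $F$ and $g$ are continuous, the set $E\subseteq[a,b]^2$ on which $g\!\left(F(x,y)\right)=\tfrac12\!\left(g(x)+g(y)\right)$ holds is closed; it contains every adjacent grid pair, bisymmetry lets one enlarge $E$ to a dense subset of $[a,b]^2$, and a closed dense set is everything, so $E=[a,b]^2$. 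This closedness-plus-propagation step is exactly where continuity cannot be omitted, which explains why it is an essential hypothesis of the theorem. Finally, setting $f:=g^{-1}$ yields the representation \eqref{eqa1} on $[a,b]$; since $a<b$ were arbitrary and $f$ is determined up to an increasing affine change of variable, the pieces built over an exhausting family of subintervals agree after normalisation and patch together into a single continuous, strictly increasing $f$ on all of $I$.
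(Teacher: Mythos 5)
This statement is Acz\'el's classical characterization; the paper does not prove it but cites \cite{Aczel1948} and \cite[Theorem~1, p.~287]{Aczel1989}, so your proposal must be measured against that classical argument. Your outline is essentially Acz\'el's proof: the same dyadic grid $x_{(2j+1)/2^{n+1}}=F\bigl(x_{j/2^n},x_{(j+1)/2^n}\bigr)$, the same averaging identity $F(x_r,x_s)=x_{(r+s)/2}$, density of the grid, and extension of $g$ to an increasing homeomorphism. The sufficiency direction and the strict-mean observation are correct, and your density argument (tracking the maximal gap $\delta_n$ and using compactness plus continuity of $F$ to contradict $\delta>0$) is a clean, correct variant of the step where Acz\'el also invokes continuity.

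One point needs correction, and it happens to be the heart of the matter. You assert that for non-adjacent grid points the averaging identity ``is not a finite algebraic consequence of bisymmetry, so continuity must be used,'' and you then lean on a closed-set/density argument in $[a,b]^2$ whose input --- ``bisymmetry lets one enlarge $E$ to a dense subset of $[a,b]^2$'' --- is exactly the claim you declared non-algebraic. In fact the identity $F\bigl(x_{j/2^n},x_{k/2^n}\bigr)=x_{(j+k)/2^{n+1}}$ for all same-level pairs \emph{is} a purely algebraic consequence of bisymmetry together with symmetry and reflexivity, proved by induction on the level $n$ (write each odd-numerator point as $F$ of its two level-$n$ neighbours, apply bisymmetry to regroup, and use symmetry to reorder); this is precisely the mechanism the present paper recycles in Section~\ref{s31}, where symmetry on all of $(u,v,F)_\infty$ is obtained by the same induction. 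Since the adjacent pairs alone accumulate only near the diagonal, your set $E$ is dense in $[a,b]^2$ only \emph{after} this induction is carried out, so as written the key step of your proof is asserted rather than established. The induction is standard and valid, so the architecture is sound, but you should either carry it out or cite it; continuity is genuinely needed only for the density of the grid and for the passage from dyadic to arbitrary arguments.
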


A function $F$ which satisfies \eqref{eqa1} is called a {\it quasi-arithmetic mean}.  In other words, a quasi-arithmetic mean is a conjugate of the arithmetic mean by a continuous bijection $f$.

In  \cite{BKSZ2021} the authors showed that the assumption of continuity for $F$ in Theorem \ref{Aczel1} can be omitted insomuch as it is the consequence of the remaining conditions.

\begin{thm}\label{T:bisymmetryimpliescontinuity}
A function $F\colon I^2\to I$ is reflexive, partially strictly increasing, symmetric and bisymmetric mapping if and only if there is a continuous, strictly monotonic function $f$ such that
\begin{equation}\label{Eq_foalak}
F(x,y)=f\left(\frac{f^{-1}(x)+f^{-1}(y)}{2}\right),\qquad x,y\in I.
\end{equation}

In particular every reflexive, partially strictly increasing, symmetric and bisymmetric binary mapping defined on $I$ is continuous.
\end{thm}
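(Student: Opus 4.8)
The reverse implication is routine: if $F$ has the stated form with $f$ continuous and strictly monotone, then in the coordinate $t=f^{-1}(\cdot)$ the map $F$ becomes the ordinary arithmetic mean, whose reflexivity, strict monotonicity, symmetry and bisymmetry are immediate. So the plan is to prove the forward implication, and by Acz\'el's Theorem \ref{Aczel1} it suffices to prove that $F$ is continuous; the representation \eqref{Eq_foalak} then follows at once. As preliminary reductions, note that reflexivity together with partial strict monotonicity gives, for $x<y$, the chain $x=F(x,x)<F(x,y)<F(y,y)=y$, so $F$ is a strict mean, and by Observation \ref{o1} it is cancellative. Setting $y=x$ in the bisymmetry equation and using reflexivity yields the self-distributivity law
\[ F\bigl(x, F(u,v)\bigr) = F\bigl(F(x,u), F(x,v)\bigr), \]
which says that each partial map $g_a=F(a,\cdot)$ is an endomorphism of $(I,F)$; this is the main algebraic tool.

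Next I would build a dyadic skeleton. Fix $a<b$ in $I$ and define $\phi$ on the set $\cD$ of dyadic rationals of $[0,1]$ by $\phi(0)=a$, $\phi(1)=b$ and the recursion
\[ \phi\!\left(\tfrac{2j+1}{2^{n}}\right) = F\!\left(\phi\!\left(\tfrac{j}{2^{n-1}}\right),\, \phi\!\left(\tfrac{j+1}{2^{n-1}}\right)\right). \]
Since $F$ is a strict mean, $\phi$ is strictly increasing with values in $(a,b)$. Using bisymmetry and symmetry I would then prove, by induction on the level, that the full midpoint relation
\[ F\bigl(\phi(s), \phi(t)\bigr) = \phi\!\left(\tfrac{s+t}{2}\right) \]
holds for \emph{all} dyadic $s,t\in[0,1]$, not merely for consecutive pairs. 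This is where the functional equation is genuinely needed, and it exhibits the image $D=\phi(\cD)$ as an $F$-closed, strictly ordered copy of the dyadic structure.

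The \textbf{main obstacle} is density: I must show that $D$ is dense in $[a,b]$, equivalently that the mesh of the $n$-th level partition tends to $0$. This does not follow from monotonicity alone, since a priori the bisection point $F(p,q)$ could sit so close to an endpoint that a nested sequence of subintervals retains a positive length, leaving a genuine gap $(c,d)\subseteq[a,b]$ with $D\cap(c,d)=\emptyset$. I would rule this out through the endomorphism property: each $g_a$ maps $D$ into $D$ strictly increasingly, hence must carry a putative maximal gap onto a gap, and iterating the partial maps built from the self-distributivity law, together with cancellativity, forces such a gap to be reproduced at every scale in a manner incompatible with $F$ being a strict mean. Making this heuristic rigorous—precisely controlling how gaps propagate under the family of partial maps—is the technical heart of the argument.

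Once density is in hand, $\phi$ extends uniquely to a strictly increasing continuous bijection $f\colon[0,1]\to[a,b]$, and $G(x,y):=f\!\left(\tfrac{f^{-1}(x)+f^{-1}(y)}{2}\right)$ is continuous and coincides with $F$ on the dense set $D\times D$. Because $F$ is increasing in each variable while $G$ is continuous, the two must agree everywhere: approximating by dyadic points in each variable in turn squeezes the one-sided limits of the monotone $F$ between equal values of $G$, so $F$ has no jump and $F=G$. Thus $F$ is continuous on $[a,b]$; as $a<b$ were arbitrary, $F$ is continuous on all of $I$, and Theorem \ref{Aczel1} supplies the representation \eqref{Eq_foalak} with a single continuous strictly monotonic $f$.
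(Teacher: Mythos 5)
Your overall architecture matches the paper's: build the dyadic skeleton $\phi$ with the midpoint identity $F(\phi(s),\phi(t))=\phi\bigl(\tfrac{s+t}{2}\bigr)$, identify density of the image as the decisive issue, and then extend to a continuous strictly increasing $f$ and transfer the identity by monotonicity. The reverse implication and the final extension/agreement step are fine. But the one step you yourself flag as ``the technical heart'' --- density of $D=\phi(\cD)$ in $[a,b]$ --- is exactly where your proposal has a genuine gap, and the mechanism you sketch for closing it is not the right one. You argue that a gap would be ``reproduced at every scale in a manner incompatible with $F$ being a strict mean.'' That is not a contradiction: a strict, strictly monotone, reflexive mean can perfectly well send the dyadic orbit onto a nowhere dense (Cantor-like) set with gaps at every scale; nothing in the strict-mean inequalities $x<F(x,y)<y$ prevents the bisection points from accumulating only on a meagre set. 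Gap propagation alone only ever produces countably many gaps, which is entirely consistent with non-density.

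The paper's actual argument (Lemma \ref{l1}, with details in \cite{BKSZ2021}) closes this step by a cardinality count, which is the idea missing from your sketch. One first extends $\phi$ monotonically to all of $[0,1]$ to see that $\overline{\phi(\cD)}$ is uncountable, hence has uncountably many \emph{two-sided} accumulation points (only countably many points can be isolated or one-sided, since each such point contributes a disjoint interval). If there were a gap $]X,Y[$ in the closure, then for any two distinct two-sided accumulation points $s\neq t$ one shows, using the midpoint identity and strict monotonicity, that
\[
]F(X,s),F(Y,s)[\ \cap\ ]F(X,t),F(Y,t)[\ =\ \emptyset ,
\]
and each of these intervals is nonempty because $X<Y$ and $F$ is partially strictly increasing. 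This manufactures uncountably many pairwise disjoint nondegenerate open intervals inside a bounded interval, which is impossible. Without this (or an equivalent) quantitative input, your density step does not go through, so the proposal as written does not constitute a proof.
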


\section{Dichotomy result on the symmetry of bisymmetric, strictly monotone, reflexive binary functions}\label{s3}

We prove that a reflexive, bisymmetric, partially strictly increasing map is either totally symmetric or totally non-symmetric on the whole domain.

The main result of this section runs as follows:

\begin{thm}\label{T:bisymmetryimpliescontinuity2}
Let us assume that $I$ is a proper interval and $F\colon I^2\to I$ is a reflexive, partially strictly increasing and bisymmetric map. Suppose that there is an $\alpha,\beta\in I$ ($\alpha\ne \beta$) such that $F(\alpha,\beta)=F(\beta,\alpha)$.  Then $F$ is symmetric on $I$ and continuous, i.e., $F$ is a quasi-arithmetic mean. 
\end{thm}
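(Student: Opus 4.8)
The plan is to promote symmetry at the single pair $(\alpha,\beta)$ to symmetry on all of $I^2$; once that is done, Theorem \ref{T:bisymmetryimpliescontinuity} immediately yields that $F$ is a continuous quasi-arithmetic mean. Two structural facts are used throughout. First, reflexivity together with partial strict monotonicity forces $F$ to be a strict mean: for $x<y$ one has $x=F(x,x)<F(x,y)<F(y,y)=y$; in particular (Observation \ref{o1}) $F$ is cancellative, and, assuming $\alpha<\beta$, the common value $\gamma:=F(\alpha,\beta)=F(\beta,\alpha)$ lies strictly between $\alpha$ and $\beta$. Second, the flip $F^\ast(x,y):=F(y,x)$ again satisfies all the hypotheses and agrees with $F$ at $(\alpha,\beta)$, so the target $F=F^\ast$ is really a rigidity statement: two such maps agreeing at one off-diagonal point must coincide.

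Write $x\sim y$ for $F(x,y)=F(y,x)$. The engine of the proof is that bisymmetry propagates $\sim$. Feeding the symmetric pair into bisymmetry gives $F(F(\alpha,y),F(\beta,v))=F(F(\alpha,\beta),F(y,v))=F(F(\beta,\alpha),F(y,v))=F(F(\beta,y),F(\alpha,v))$ for all $y,v$; putting $v=y$ yields $F(\alpha,y)\sim F(\beta,y)$, and the symmetric grouping gives $F(y,\alpha)\sim F(y,\beta)$. More generally $a\sim b$ implies $F(a,y)\sim F(b,y)$ and $F(y,a)\sim F(y,b)$ for every $y$. A second application, rewriting $c=F(c,c)$ and expanding $F(F(a,b),c)$ and $F(c,F(a,b))$ by bisymmetry, shows that if $a,b,c$ are pairwise $\sim$-related then $F(a,b)\sim c$. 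Hence the $F$-closure $D$ of $\{\alpha,\beta\}$ is a set of pairwise $\sim$-related points; since $F$ is a strict mean, $D$ is a dyadic-type subdivision of $[\alpha,\beta]$, and letting the free variable $y$ run over $I$ in the propagation step spreads symmetric pairs across the whole of $I$.

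The heart of the argument, and the step I expect to be the main obstacle, is to turn this countable, generated family of symmetric pairs into genuine symmetry on a full two-dimensional region, without yet having continuity available. Concretely, one must show that $D$ is order-dense in $[\alpha,\beta]$ and that no jump of the a priori only monotone sections $F(\cdot,y)$, $F(y,\cdot)$ can separate $D$ from itself; this forces $x\sim y$ for all $x,y$ in a subinterval $J$. This is exactly the delicate dyadic analysis underlying \cite{BKSZ2021}, which the preparatory lemmata of the next subsection establish in the form needed here. Granting it, $F|_{J^2}\colon J^2\to J$ is reflexive, partially strictly increasing, symmetric and bisymmetric, so Theorem \ref{T:bisymmetryimpliescontinuity} makes it a continuous quasi-arithmetic mean on $J$.

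Finally one globalizes. On each subinterval produced this way $F$ has a representation $f(\tfrac{f^{-1}(x)+f^{-1}(y)}{2})$, and on overlaps the conjugating functions coincide up to an affine change, so they glue into a single continuous strictly monotone $f$; combined with the fact that the symmetric region spreads across all of $I$ (the propagation step with $y$ tending to the endpoints of $I$), this yields the representation \eqref{Eq_foalak} on all of $I^2$. In particular $F$ is symmetric and continuous, as claimed. The two places that demand real work are the no-jump/density step and the gluing; by contrast, the algebraic propagation of $\sim$ is a short consequence of bisymmetry and reflexivity.
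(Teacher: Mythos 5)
Your local analysis is sound and coincides with the paper's: the propagation identities ($a\sim b$ implies $F(a,y)\sim F(b,y)$ and $F(y,a)\sim F(y,b)$, via bisymmetry and $F(y,y)=y$), the fact that the $F$-closure of $\{\alpha,\beta\}$ is pairwise $\sim$-related, and the deferred dyadic/density argument giving symmetry and a quasi-arithmetic representation on $[\alpha,\beta]$ are exactly the content of the paper's auxiliary lemmata (Lemma \ref{l1} and Lemma \ref{l:union}). The genuine gap is in the globalization. The claim that ``letting $y$ run over $I$ spreads symmetric pairs across the whole of $I$'' does not produce overlapping symmetric intervals to glue: if $J=[u,v]$ is the maximal symmetric class containing $\alpha,\beta$ and $y>v$, propagation only tells you that $F(u,y)\sim F(v,y)$, hence that $[F(u,y),F(v,y)]$ lies in a single symmetric class; but without continuity nothing prevents $F(u,y)>v$ for \emph{every} $y>v$ (the section $F(u,\cdot)$ may jump at $v$), so the new symmetric interval can be disjoint from $J$ and your gluing step never gets started. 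A priori $I$ could be partitioned into many pairwise disjoint maximal symmetric classes, and excluding this is the actual heart of the theorem, not a routine consequence of propagation.

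The paper closes this with a global combinatorial--topological argument that your proposal lacks: $\sim$ is an equivalence relation whose classes are closed intervals, totally ordered, and satisfy $I_1<F(I_1,I_2)<I_2$ whenever $I_1<I_2$. If there were at least two classes, iteration produces infinitely many proper ones; if the family of classes is countable this contradicts Sierpi\'nski's theorem (a continuum cannot be partitioned into countably many, at least two, nonempty disjoint closed sets), and if it is uncountable one gets uncountably many pairwise disjoint proper subintervals of $\R$ via the classes containing $F(I_{\alpha\beta},I_j)$, which is impossible. Some argument of this type is indispensable; as written, your proof establishes symmetry on $[\alpha,\beta]$ and on various images under $F$, but not on all of $I$. (The final compact-exhaustion step for non-compact $I$ is fine and matches the paper.)
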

As an immediate consequence of Theorem \ref{T:bisymmetryimpliescontinuity2} we get the following dichotomy result.
\begin{cor}\label{cor1}
Let $I$ be a proper interval, then
every bisymmetric, partially strictly increasing, reflexive, binary function $F\colon I^2\to I$ is
either symmetric everywhere on $I$ or nowhere symmetric on $I$.
\end{cor}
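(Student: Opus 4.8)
The plan is to obtain the corollary as a direct logical repackaging of Theorem \ref{T:bisymmetryimpliescontinuity2}, so essentially no new analytic work is required. The one subtlety I would settle at the outset is the precise meaning of \emph{nowhere symmetric}: since reflexivity forces the trivial identity $F(x,x)=x$ on the diagonal, the diagonal always carries a tautological symmetry, and so the natural reading of the dichotomy is that symmetry is tested only on pairs of \emph{distinct} points. Accordingly I interpret \emph{symmetric everywhere} as the statement $F(x,y)=F(y,x)$ for all $x,y\in I$, and \emph{nowhere symmetric} as $F(x,y)\neq F(y,x)$ for every pair $x\neq y$.

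With this convention fixed, I would split into two exhaustive and mutually exclusive cases according to whether $F$ possesses a pair of distinct points at which it is symmetric. In the first case, suppose there exist $\alpha,\beta\in I$ with $\alpha\neq\beta$ and $F(\alpha,\beta)=F(\beta,\alpha)$. Then $F$ satisfies exactly the hypotheses of Theorem \ref{T:bisymmetryimpliescontinuity2}, namely it is reflexive, partially strictly increasing, bisymmetric, and symmetric at one off-diagonal pair; hence $F$ is symmetric on all of $I$ (and, as a bonus, continuous). This is the first horn of the dichotomy. In the remaining case no such pair exists, which is precisely the assertion that $F(x,y)\neq F(y,x)$ for all distinct $x,y$, i.e.\ $F$ is nowhere symmetric; this is the second horn.

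Since these two cases cover every possibility and cannot hold simultaneously, the dichotomy follows immediately. I do not anticipate any genuine obstacle at this level: all of the difficulty is concentrated in Theorem \ref{T:bisymmetryimpliescontinuity2}, whose strength lies in upgrading a single off-diagonal coincidence $F(\alpha,\beta)=F(\beta,\alpha)$ into global symmetry. The corollary merely records the contrapositive phrasing of that statement, observing that the failure of symmetry everywhere forbids even one off-diagonal coincidence, which is exactly what being nowhere symmetric means.
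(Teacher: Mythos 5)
Your proposal is correct and coincides with the paper's treatment: the corollary is stated there as an immediate consequence of Theorem \ref{T:bisymmetryimpliescontinuity2}, obtained exactly by the case split you describe (either some off-diagonal pair is symmetric, in which case the theorem forces global symmetry, or no such pair exists, which is the definition of nowhere symmetric). Your explicit remark that the diagonal must be excluded from the notion of ``nowhere symmetric'' because reflexivity makes $F(x,x)=F(x,x)$ tautological is a sensible clarification consistent with the paper's implicit convention.
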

We illustrate the strictness of our main results with some examples.

The map
\[
F\colon[0,1]^2\to[0,1],\quad F(x,y):=\begin{cases}
\frac{2xy}{x+y}&\mbox{if } x\in[0,1], \ \mbox{and } y\in [0,\tfrac12[\\
\sqrt{xy}&\mbox{if } x\in [0,\tfrac12],\ \mbox{and } y\in [\tfrac12,1]\\
\frac{x+y}{2}&\mbox{otherwise}
\end{cases}
\]
is reflexive, partially strictly monotone increasing, not bisymmetric and it is neither symmetric nor non-symmetric for every elements of $[0,1]^2$.

\medskip
The map
\[
F\colon[0,1]^2\to[0,1],\quad F(x,y):=\begin{cases}
y&\mbox{if } x,y\in[\tfrac12,1]\\
\min\{x,y\}&\mbox{otherwise}
\end{cases}
\]
is reflexive, bisymmetric, partially monotone increasing but not strictly, and it is neither symmetric nor non-symmetric for every elements of $[0,1]^2$.

\medskip
Concerning the relaxation of reflexivity condition we can formulate the following open problem.

\begin{open} \label{op1}
Is it true or not that every bisymmetric, partially strictly increasing map is automatically continuous?
\end{open}

If the answer is affirmative, then the resulted map can be written in the following form  (see \cite[Exercise 2, p. 296]{Aczel1989}).
\[
F(x,y)=k^{-1}(ak(x)+bk(y)+c),
\]
where $k$ is an invertible, continuous function, and $a,b,c$ are arbitrary real constants such that $ab\not=0$. In this case $F$ is either symmetric or non-symmetric everywhere. It is reflexive only if $c=0$ and $a+b=1$.

We could not find a map which is bisymmetric, partially strictly increasing, not reflexive and neither symmetric nor non-symmetric for every pair of $I^2$.
\subsection{Auxiliary results and needed concepts}\label{s31}

\phantom{nnn}

Let $(u,v,F)_n$ denote the set of all expressions that can be build as n-times compositions of $F$ by using $u$ and $v$.
For instance
\begin{align*}
(u,v,F)_0=&\{u,v\}\\
(u,v,F)_1=&\{F(u,u),F(u,v), F(v,u), F(v,v)\}\\
(u,v,F)_2=&\{F(F(u,u),u),F(F(u,v),u), F(F(v,u),u), F(F(v,v),u),\\ F(F(u,u),v)&,F(F(u,v),v), F(F(v,u),v), F(F(v,v),v), F(u,F(u,u)), \\ F(u,F(u,v))&, F(u,F(v,u)), F(v,F(v,v)),F(v,F(u,u)),F(v,F(u,v)), \\ F(v,F(v,u))&, F(v,F(v,v))\}.
\end{align*}

Moreover, let $(u,v,F)_{\infty}$ denote the set of all expressions that can be build as any number of compositions of $F$ by using $u$ and $v$. Formally, $$(u,v,F)_{\infty}=\bigcup_{n=1}^{\infty}(u,v,F)_{n}. $$

Reflexivity implies that $(u,v,F)_k\subset (u,v,F)_n$, if $k<n$. Hence, for the sake of convenience, we can introduce the notion of the length of expressions of $(u,v,F)_{\infty}$ as follows.
Let $x\in (u,v,F)_{\infty}$, such that
\[
\min_{k\in\N}\{x\in (u,v,F)_k\}=k_0.
\]
Then $k_0$ is called the length of $x$. Notation: $\mathcal{L}(x)=k_0$.

For example the length of $x = F(F(u,u),v)=F(u,v)$ is $1$.

We go on this subsection with the proof of some technical lemmata.

\begin{lem} If $F(u,v)=F(v,u)$, then F is symmetric for any pair $(t,s)$, where $t,s\in (u,v, F)_{\infty}$. \end{lem}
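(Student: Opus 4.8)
The plan is to prove the statement by induction on the combined length $n=\mathcal{L}(t)+\mathcal{L}(s)$ of the two expressions, using bisymmetry to reduce the symmetry of a pair to the symmetry of strictly shorter pairs. Only reflexivity and bisymmetry will be needed, which matches the hypotheses actually available at this point.

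First I would settle the base case $n=0$, that is $t,s\in\{u,v\}$. The diagonal pairs $(u,u)$ and $(v,v)$ are symmetric by the trivial identity $F(w,w)=F(w,w)$, while the pair $(u,v)$ is symmetric precisely by the hypothesis $F(u,v)=F(v,u)$. This exhausts the four possibilities and anchors the induction.

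For the inductive step, assume the claim for every pair of combined length less than $n$, and take $t,s\in(u,v,F)_{\infty}$ with $\mathcal{L}(t)+\mathcal{L}(s)=n\ge 1$. Using reflexivity I would express both arguments through their outermost application of $F$: any $x$ with $\mathcal{L}(x)\ge 1$ can be written as $x=F(a,b)$ with $\mathcal{L}(a),\mathcal{L}(b)<\mathcal{L}(x)$, and any generator $x\in\{u,v\}$ can be written as $x=F(x,x)$ with both components of length $0$. Writing $t=F(a,b)$ and $s=F(c,d)$ in this way, bisymmetry gives
\[
F(t,s)=F\big(F(a,b),F(c,d)\big)=F\big(F(a,c),F(b,d)\big),
\]
\[
F(s,t)=F\big(F(c,d),F(a,b)\big)=F\big(F(c,a),F(d,b)\big).
\]
A short check of the length bookkeeping shows that $\mathcal{L}(a)+\mathcal{L}(c)<n$ and $\mathcal{L}(b)+\mathcal{L}(d)<n$: when $s$ has positive length each of $\mathcal{L}(c),\mathcal{L}(d)$ is strictly below $\mathcal{L}(s)$, and when $s\in\{u,v\}$ the components $c=d=s$ have length $0$ while $\mathcal{L}(a),\mathcal{L}(b)<\mathcal{L}(t)=n$. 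Hence the induction hypothesis yields $F(a,c)=F(c,a)$ and $F(b,d)=F(d,b)$, and substituting these into the two displays gives $F(t,s)=F(s,t)$, which closes the induction.

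The individual verifications are routine; the one place demanding care, and the main obstacle, is the length accounting in the inductive step, especially the case in which one argument is a bare generator. Reflexivity is exactly what lets me rewrite such a generator as $F(x,x)$ without increasing its length, so that after the bisymmetric regrouping the inner pairs are genuinely shorter and the induction closes uniformly in every case.
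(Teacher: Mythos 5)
Your proof is correct and follows essentially the same route as the paper's: induct on expression length, write each argument via its outermost application of $F$ (padding generators as $F(x,x)$ by reflexivity), regroup with bisymmetry, and apply the inductive hypothesis to the inner pairs. The only cosmetic difference is that you induct on $\mathcal{L}(t)+\mathcal{L}(s)$ while the paper inducts on the maximum length, and your bookkeeping for this variant checks out.
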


\begin{proof}
We prove it by induction with respect to the length of the elements of $(u,v, F)_{\infty}$. It is easy to check that $F$ is symmetric for any pair of elements $(u,v,F)_k$ if $k=0,1$. For instance $F$ is symmetric for $(u,F(u,v))$.
Indeed, applying the reflexivity and bisymmetry of $F$ and  $F(u,v)=F(v,u)$, we get
\begin{eqnarray*}
F(u,F(u,v))=F(F(u,u),F(v,u))=F(F(u,v),u).
\end{eqnarray*}

Similarly, $F$ is symmetric for  $\{(u,F(v,u)), (v,F(u,v)), (v,F(v,u)) \}$, and hence for any pair of elements of $(u,v,F)_1$. 

Now, we prove that $F$ is symmetric for any pair of elements of $(u,v,F)_k$, where $k\le n+1$, under the assumption that $F$ is symmetric for any pair of elements of $(u,v,F)_k$, where $k\le n$.
Let $x$ and $y$ be two elements of $(u,v, F)_{\infty}$ such that
$\mathcal{L}(x)=k$, $\mathcal{L}(y)=l$ where $k,l \le n+1$. Then, there exists $a,b,c,d \in (u,v, F)_{\infty}$, such that $\mathcal{L}(a), \mathcal{L}(b), \mathcal{L}(c), \mathcal{L}(d)\leq n$ and
\[
x=F(a,b), \qquad y=F(c,d).
\]
By the inductive hypothesis we get that $F$ is symmetric for each pair of the set $\{a,b,c,d\}$. Applying the bisymmetry of $F$ we obtain
\begin{eqnarray*}
F(x,y)&=&F(F(a,b),F(c,d))=F(F(a,c),F(b,d))\\
      &=&F(F(c,a),F(d,b))=F(F(c,d),F(a,b))=F(y,x).
\end{eqnarray*}
\end{proof}

\begin{lem}\label{l1}
Let $I$ be a proper interval, and $F:I^2\to I$ be a bisymmetric, partially strictly increasing and reflexive function. Suppose that
there are $u,v\in I,\ u<v$ such that $F(u,v)=F(v,u)$. Then there is an invertible, continuous function $f\colon[0,1]\to [u,v]$ such that $F$ can be written in the form
\begin{equation}\label{eqam}
F(x,y)=f\left(\frac{f^{-1}(x)+f^{-1}(y)}{2}\right),\qquad x,y\in [u,v].
\end{equation}
In particular, $F(s,t)=F(t,s)$ holds for all $s,t\in [u,v]$.
\end{lem}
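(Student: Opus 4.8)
The plan is to localize everything to the square $[u,v]^2$ and to reconstruct there the quasi-arithmetic representation by re-running the method behind Theorem~\ref{T:bisymmetryimpliescontinuity}; the point is that that method uses symmetry only along the orbit of $\{u,v\}$, which the previous lemma already grants us. First I would check that $F$ maps $[u,v]^2$ into $[u,v]$ and is in fact a strict mean there. Indeed, for $x,y\in[u,v]$ partial strict monotonicity together with reflexivity gives
\[
u=F(u,u)\le F(u,y)\le F(x,y)\le F(v,y)\le F(v,v)=v,
\]
and for $x\ne y$ the relevant inequalities become strict, so $\min\{x,y\}<F(x,y)<\max\{x,y\}$. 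Hence $G:=F|_{[u,v]^2}\colon[u,v]^2\to[u,v]$ is a reflexive, bisymmetric, partially strictly increasing strict mean on the proper interval $[u,v]$. Writing $D:=(u,v,F)_{\infty}\subseteq[u,v]$, the preceding lemma (applied to the symmetric pair $u,v$) shows that $G$ is symmetric on every pair of elements of $D$; note also that $D$ is closed under $G$ and contains $u=\min$ and $v=\max$.

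The decisive step — and the one I expect to be the main obstacle — is to show that $D$ is dense in $[u,v]$. Being a strict mean, $G$ always produces, from two distinct points $a<b$ of $D$, a new point $G(a,b)\in D$ with $a<G(a,b)<b$; this already makes $D$ order-dense in itself, and via bisymmetry and orbit-symmetry the elements of $D$ carry a consistent dyadic labelling (with $u\mapsto 0$, $v\mapsto 1$, $G(u,v)\mapsto\tfrac12$, and in general a composition sent to the dyadic obtained by replacing each $G$ by the arithmetic mean), so that $D$ is order-isomorphic to the dyadic rationals of $[0,1]$. Order-density, however, does not by itself yield topological density in $[u,v]$ (a priori $\overline{D}$ could omit an interval, as in a Cantor-type embedding), and ruling this out is exactly where the real content lies. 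I would argue by contradiction: if $\overline{D}\ne[u,v]$, pick a maximal complementary gap $(a,b)$ with $a,b\in\overline{D}$ (a maximal one exists since gaps of length $>\varepsilon$ are finite in number), approximate $a,b$ by orbit points, and use the strict-mean property together with monotonicity and bisymmetry to transport this gap to a strictly longer one, contradicting maximality. This "no largest gap" argument is the analytic heart of the method of Theorem~\ref{T:bisymmetryimpliescontinuity}, and it goes through here because symmetry is invoked in it only on $D$.

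Granting density, I would finish as in the proof of Theorem~\ref{T:bisymmetryimpliescontinuity}. The dyadic labelling above is an order isomorphism of the dense set $D$ onto the dyadic rationals of $[0,1]$, hence (by density on both sides) extends uniquely to an increasing homeomorphism $\varphi\colon[u,v]\to[0,1]$; set $f:=\varphi^{-1}\colon[0,1]\to[u,v]$, a continuous strictly increasing bijection. By construction $G(x,y)=f\bigl(\tfrac{f^{-1}(x)+f^{-1}(y)}{2}\bigr)$ holds for $x,y\in D$. Since the right-hand side is continuous while, for each fixed $x$, $y\mapsto G(x,y)$ is monotone, equality on the dense set $D$ forces equality for all $y\in[u,v]$ (a monotone function agreeing with a continuous one on a dense set agrees everywhere); fixing then $y$ and varying $x$ the same way yields \eqref{eqam} on all of $[u,v]^2$. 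In particular $G$, being a quasi-arithmetic mean, is symmetric, which is the last assertion of the lemma.

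Alternatively, once density is available one may avoid re-deriving the representation by hand: density plus the orbit-symmetry of $G$ can be pushed, through the same monotone-versus-continuous sandwiching, to symmetry of $G$ on all of $[u,v]^2$, after which Theorem~\ref{T:bisymmetryimpliescontinuity} applies directly and the resulting conjugating function is rescaled affinely onto $[0,1]$ (the arithmetic mean being affine-equivariant, this preserves the form \eqref{eqam}). Either way, the genuinely new ingredient is not the conjugation itself but the density of the orbit $D$, so that is where I would concentrate the effort.
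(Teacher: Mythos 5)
Your overall skeleton agrees with the paper's: restrict to $[u,v]^2$, use the preceding lemma to get symmetry of $F$ on the orbit $(u,v,F)_{\infty}$, build the dyadic parametrization $f$ on $\cD$ with $f\left(\frac{d_1+d_2}{2}\right)=F(f(d_1),f(d_2))$, prove that $f(\cD)$ is dense in $[u,v]$, and then extend $f$ monotonically/continuously to all of $[0,1]$ (your ``monotone function agreeing with a continuous one on a dense set'' closing step is fine). The problem is the density step, which you correctly single out as the heart of the matter but then argue by a mechanism that does not work. You propose to take a maximal complementary gap $]a,b[$ of $\overline{D}$ and ``transport this gap to a strictly longer one'' via the strict-mean property, contradicting maximality. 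There is no reason the transported interval is longer: $F$ is a strict mean on $[u,v]$, so $t\mapsto F(t,s)$ contracts rather than expands in the model case (for the arithmetic mean the image of $]X,Y[$ under averaging with $s$ has exactly half the length), and in the absence of continuity you have no length control in either direction. Moreover you would also need the transported interval to be again a gap of $\overline{D}$, which is not justified. So the contradiction with maximality cannot be reached as described, and the proof has a genuine hole precisely where you locate the ``real content.''

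The paper's argument at this point is of a completely different, cardinality-theoretic nature and makes no reference to lengths or to maximal gaps. One first extends $f$ strictly increasingly (hence injectively) to all of $[0,1]$ by one-sided limits, so $\overline{f(\cD)}$ is uncountable; since a subset of a compact interval can have only countably many isolated and one-sided accumulation points, $\overline{f(\cD)}$ has uncountably many \emph{two-sided} accumulation points. If $]X,Y[$ were a gap of $\overline{f(\cD)}$, one shows, using the identity on dyadics, bisymmetry, and monotonicity, that for any two distinct two-sided accumulation points $s\not=t$ the intervals $]F(X,s),F(Y,s)[$ and $]F(X,t),F(Y,t)[$ are disjoint nonempty open intervals. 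That yields uncountably many pairwise disjoint proper intervals in $\R$, which is impossible; hence $f(\cD)$ is dense. If you want to repair your write-up, replace the ``no largest gap'' paragraph by this disjointness-plus-cardinality argument (or an equivalent one); the rest of your proposal then goes through essentially as in the paper.
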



\begin{proof}
The argument is similar to the proof\footnote{for the details see \cite[proof of Theorem 8 on page 479]{BKSZ2021}} of Theorem \ref{T:bisymmetryimpliescontinuity}. The main observation is that the proof 
use only the symmetry of $F$ on the images of dyadic numbers which is exactly the set $(u,v,F)_{\infty}$ in our case. For convenience we briefly sketch the crucial steps of the  proof.

\begin{itemize}
    \item Define $f\colon[0,1]\to[u,v]$ recursively on the set of dyadic numbers $\cD$ to the set $(u,v,F)_{\infty}$, so that $f(0)=u,f(1)=v, f(\frac{1}{2})=u\circ v$ and $f$ satisfies the identity \begin{equation}\label{identity_on_dyadics}
f\left(\frac{d_1+d_2}{2}\right)=F(f(d_1), f(d_2))
\end{equation}
for every $d_1,d_2\in\cD$. There can be proved that such an $f$ is well-defined and strictly increasing. In this argument we crucially use the fact that $F$ is symmetric on $(u,v,F)_{\infty}$. By its recursive definition, it is clear that $f(\cD)=(u,v,F)_{\infty}$. (See also Acz\'el and Dhombres \cite{Aczel1989} on the pages $287-290$.)
\item The closure of $f(\cD)$ has uncountably many two-sided accumulation points\footnote{A point $\alpha$ in a set $H$ is a {\it two-sided accumulation point} if for every
$\varepsilon>0$, we have
\[
]\alpha-\varepsilon,\alpha[~\cap~H\not=\emptyset\quad\mbox{ and }\quad ]\alpha,\alpha+\varepsilon[~\cap~H\not=\emptyset.
\].}.

\item If $f(\cD)$ is not dense in $[u,v]$, i.e., there are $X,Y\in [u,v]$ such that $]X,Y[~\cap ~ f(\cD)= \emptyset$, then one can show that for arbitrary two-sided accumulation points $s\not=t$ we have
\[
]F(X,s),F(Y,s)[~\cap~ ]F(X,t),F(Y,t)[ ~=\emptyset.
\]
Hence the cardinality of disjoint intervals as well as the cardinality of two-sided accumulation points is uncountable, which gives a contradiction. Thus, $f(\cD)$ has to be dense in $[u,v]$.
\item If $f(\cD)$ is dense in $[u,v]$, then $f$ can be defined strictly increasingly on $[0,1]$, so that $f$ is continuous and satisfies \eqref{eqam}.
\end{itemize}
\end{proof}





\begin{lem}\label{l:union}
Let $I_1, I_2\subseteq I$ be two intervals such that $F$ is symmetric on $I_1$ and $I_2$.  Then $F$ is symmetric on $F(I_1, I_2):=\{\ F(x_1,x_2)\ |\ x_1\in I_1,\ x_2\in I_2\ \}$.
Furthermore,  if $I_1\cap I_2\not=\emptyset$, then $F$ is symmetric on $I_1\cup I_2$.
\end{lem}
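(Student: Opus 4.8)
The plan is to establish the two assertions in turn, deriving the statement about $I_1\cup I_2$ from the statement about $F(I_1,I_2)$ together with cancellativity.

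First I would prove symmetry on $F(I_1,I_2)$ by the same bisymmetry shuffle used in the first lemma of this subsection. I take arbitrary $a=F(x_1,x_2)$ and $b=F(y_1,y_2)$ in $F(I_1,I_2)$, with $x_1,y_1\in I_1$ and $x_2,y_2\in I_2$, and compute
\begin{align*}
F(a,b)&=F(F(x_1,x_2),F(y_1,y_2))=F(F(x_1,y_1),F(x_2,y_2))\\
&=F(F(y_1,x_1),F(y_2,x_2))=F(F(y_1,y_2),F(x_1,x_2))=F(b,a),
\end{align*}
where the first and last equalities are bisymmetry and the middle one uses the assumed symmetry on $I_1$ (for the pair $x_1,y_1$) and on $I_2$ (for the pair $x_2,y_2$). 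This part is routine and presents no difficulty.

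The substantive part is symmetry on $I_1\cup I_2$. I fix $a,b\in I_1\cup I_2$; if both lie in $I_1$, or both in $I_2$, there is nothing to prove, so the only genuine case is the mixed one, say $a\in I_1$ and $b\in I_2$. The idea is to insert a common point $c\in I_1\cap I_2$ (which exists by hypothesis) so as to convert the question into one about $F(I_1,I_2)$. Two applications of bisymmetry, each collapsing a factor $F(c,c)=c$ by reflexivity, yield
\begin{align*}
F(F(a,c),F(b,c)) &= F(F(a,b),c),\\
F(F(b,c),F(a,c)) &= F(F(b,a),c).
\end{align*}
The key observation is that both intermediate values lie in $F(I_1,I_2)$: indeed $F(a,c)\in F(I_1,I_2)$ since $a\in I_1$ and $c\in I_2$, while $F(b,c)=F(c,b)\in F(I_1,I_2)$ after invoking symmetry on $I_2$ to move $c\in I_1$ into the first slot. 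Hence the first part applies to the pair $F(a,c),F(b,c)$, so the two left-hand sides above coincide, giving $F(F(a,b),c)=F(F(b,a),c)$. Cancellativity in the first variable (Observation \ref{o1}) then yields $F(a,b)=F(b,a)$, completing the argument.

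I expect the only real obstacle to be the mixed case, and specifically the recognition that inserting the common point $c$ produces two intermediate values $F(a,c)$ and $F(b,c)$ that both fall inside $F(I_1,I_2)$. Once this is seen, the first part supplies their symmetry and left cancellation finishes the proof; the remaining (pure) cases and the first assertion are immediate. A minor point I would note in passing is that degenerate configurations (for instance $a=c$) simply collapse into the already-settled pure cases, so no separate treatment is needed.
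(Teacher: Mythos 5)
Your proof is correct and follows essentially the same route as the paper's: the first assertion is the identical bisymmetry computation, and for the union you likewise insert a common point $c\in I_1\cap I_2$, apply bisymmetry together with $F(c,c)=c$, and finish by cancelling the common second argument via Observation \ref{o1}. The only (harmless) organizational difference is that you obtain $F(F(a,c),F(b,c))=F(F(b,c),F(a,c))$ by citing the first assertion (after rewriting $F(b,c)=F(c,b)$ to land in $F(I_1,I_2)$), whereas the paper performs the corresponding symmetry-and-bisymmetry shuffle inline without invoking the first part.
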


\begin{proof}
We have to show that $F(z_1, z_2)=F(z_2, z_1)$ for $z_1, z_2\in F(I_1, I_2)$.
Let $x_1,x_2\in I_1$ and $y_1,y_2\in I_2$ such that $F(x_1, y_1)=z_1$ and $F(x_2, y_2)=z_2$.
Then \begin{align*}
&F(z_1, z_2)=F(F(x_1, y_1), F(x_2, y_2))=F(F(x_1, x_2), F(y_1, y_2))=\\&F(F(x_2, x_1),F(y_2, y_1))=F(F(x_2,y_2),F(x_1, y_1))=F(z_2, z_1),
    \end{align*}
where in the second and fourth equalities we use bisymmetry and the third equality holds by the symmetry of $F$ on $I_1$ and $I_2$.

Now, let us assume that $z\in I_1\cap I_2$ and let $x\in I_1$, $y\in I_2$ be arbitrary. We have to show, that $F(x,y)=F(y,x)$.

Using $F(x,z)=F(z,x)$, $F(y,z)=F(z,y)$ and the bisymmetry of $F$, we get
\begin{eqnarray*}
F(F(x,y),F(z,z))&=&F(F(x,z),F(y,z))=\\F(F(z,x),F(y,z))&=&F(F(z,y),F(x,z))=\\
F(F(y,z),F(x,z))&=&F(F(y,x),F(z,z)).
\end{eqnarray*}
Since $F$ is partially strictly increasing, by Observation \ref{o1}, it is cancellative and hence $F(x,y)=F(y,x)$.
\end{proof}

Now, we are in the position to prove our main theorem.

\subsection{Proof of Theorem \ref{T:bisymmetryimpliescontinuity2}}\label{s32}

\phantom{nnn}

Let us assume first that $I$ is a proper compact interval.

Let $\sim$ be defined on $I$ such that for any $a,b\in I$ we have $a\sim b$ if and only if $F(a,b)=F(b,a)$. Then $\sim$ is an equivalence relation.
Indeed, $\sim$ is clearly reflexive and symmetric. Transitivity is a direct consequence of Lemma \ref{l:union}.

Lemma \ref{l1} guarantees that if two points are in the same equivalence class, then the interval between them belongs to the same class. Combining this fact with the transitivity of $\sim$, we can obtain that every equivalence class is an interval.

One can introduce an ordering $<$ between the equivalence classes of $\sim$ as follows.  
For two equivalence classes $I_1, I_2$ ($I_1\ne I_2$) we say that $I_1$ smaller than $I_2$ (denote it by $I_1<I_2$) if every element of $I_1$ is smaller than every element of $I_2$. As every equivalence class is an interval, this definition is meaningful and gives a natural total ordering on the equivalence classes of $F$ in $I$.

\textbf{Step 1:} {\it  Let $I_1,I_2\subseteq  I$ be two equivalence classes such that $I_1 < I_2$, then $F(I_1, I_3)< F(I_2, I_3)$ (resp. $F(I_3, I_1)< F(I_3, I_2)$) for every equivalence class $I_3$. In particular, if $I_1<I_2$, then $I_1<F(I_1, I_2)<I_2$.}

By Lemma \ref{l:union}, we get that $F$ is symmetric on $F(I_1,I_3)$ and on $F(I_2,I_3)$. If these sets are disjoint, then $F(I_1, I_3)< F(I_2, I_3)$  by partially strictly increasingness of $F$. Now, assume that there exists a common element of $F(I_1,I_3)$ and $F(I_2,I_3)$, i.e., there exist $x_1\in I_1$, $x_2\in I_2$ and $y_1,y_2 \in I_3$ such that $F(x_1,y_1)=F(x_2, y_2)$. Hence,
\[
F(F(x_1,y_1),F(x_2,y_2))=F(F(x_2,y_2),F(x_1,y_1)).
\]
By bisymmetry, the left hand-side is equal to $F(F(x_1,x_2),F(y_1,y_2))$. Concerning the right-hand-side, bisymmetry and the fact $y_1\sim y_2$ implies that
\[
F(F(x_1,x_2),F(y_1,y_2))=F(F(x_2,x_1),F(y_1,y_2)).
\]
Moreover, $F$ is partially strictly increasing and hence, by Observation \ref{o1}, it is cancellative. Consequently, $F(x_1,x_2)=F(x_2,x_1)$, which is a contradiction, since $x_1$ and $x_2$ are belonging to two different equivalence classes.

Similarly, we can get that $F(I_3, I_1)< F(I_3, I_2)$ for any $I_3$, if $I_1< I_2$.
In particular, the choice $I_1=I_3$ gives that $F(I_1, I_1)=I_1<F(I_1, I_2)$.
Analogously, substituting $I_2=I_3$ to $F(I_1, I_3)< F(I_2, I_3)$ we have $F(I_1, I_2)=I_1<F(I_2, I_2)=I_2$.
Thus, it implies that if $I_1<I_2$, then $I_1<F(I_1, I_2)<I_2$.

\textbf{Step 2:} \emph{Every equivalence class is a closed interval.}

As we have seen, every equivalence class is a (not necessarily proper) interval. Let $I_1$ be an equivalence class with endpoints $a$ and $b$. If $a=b$, then $I_1$ is a singleton and we are done. Now we assume that $a\ne b$. Suppose that $b\not\in I_1$. Then there is an equivalence class $I_2$ containing $b$.  
However, Step 1 implies that $I_1<F(I_1,I_2)<I_2$, which is a contradiction, since $b$ is on the boundary of $I_1$ and $I_2$, so there is no space for $F(I_1, I_2)$. Thus $b\in I_1$. Similar argument shows that $a\in I_1$ and hence $I_1$ is closed.

\medskip
It is important to note that the equivalence classes can be singletons, but according to our assumption $F(\alpha,\beta)=F(\beta,\alpha)$ holds for given $\alpha,\beta\in I$, hence $\alpha\sim \beta$ and there is at least one equivalence class $I_{\alpha\beta}$ which is a proper interval that contains $\alpha$ and $\beta$.

\textbf{Step 3:} \emph{Let $I$ be a proper- and $J$ be an arbitrary interval,  then
$F(I, J)$ (reps. $F(J, I)$) contained in such an equivalence class, which is a proper interval.}

If $I$ is proper, then $F(I, J)$ has at least two elements. Hence, the equivalence class containing $F(I, J)$ is an interval which is proper.

If the intersection of $I$ and $J$ is nonempty, then the statement comes immediately from Lemma \ref{l:union}.
If the intersection is empty, then we can deduce the statement from Step 1 and Step 2.

\textbf{Step 4:} \emph{The whole interval $I$ where $F$ is defined constitutes one equivalence class.}

Let us assume that we have at least two different equivalence classes $I_1$ and $I_2$. Without loss of generality, we can assume that $I_1<I_2$ and at least one of intervals is a proper one (e.g. $I_1=I_{\alpha\beta}$).

Iterating the fact (by Step 1) that $I_1<I_2$ implies $I_1<F(I_1, I_2)<I_2$, we will get infinitely many equivalence classes that are  proper intervals. Indeed, the sequence
\[
I_{\alpha\beta},\ F(I_{\alpha\beta}, I_2), F(F(I_{\alpha\beta}, I_2), I_2),\  F(F(F(I_{\alpha\beta}, I_2), I_2), I_2)
\]
gives such equivalence classes, where $I_{\alpha\beta}$ was defined in Step 2.

Let us denote the cardinality of equivalence classes by $\kappa$ and we index the equivalence classes $I_{j}$ for $j<\kappa$.
We distinguish two cases:
\begin{enumerate}
    \item $\kappa=\aleph_0$: In this case we have countably infinitely many closed, disjoint intervals that covers the closed interval $I$. This is not possible by the following theorem of Sierpinski \cite{Si} (see also \cite[p. 173]{Ku}).
    \begin{thm*}[Sierpinski]
    Let $X$ be a compact connected Hausdorff space (i.e. continuum). If $X$ has a countable cover $\{X_i\}_{i=1}^{\infty}$ by pairwise disjoint closed subsets, then at most one of the sets $X_i$ is non-empty.
    \end{thm*}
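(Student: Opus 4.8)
The plan is to argue by contradiction: suppose the continuum $X$ is covered by pairwise disjoint closed sets $\{X_i\}_{i=1}^{\infty}$ of which at least two are nonempty, and derive a contradiction with connectedness. First I would dispose of the degenerate case in which only finitely many $X_i$ are nonempty, say $X_1,\dots,X_k$ with $k\ge 2$: a finite union of closed sets is closed, so $X_1$ and $X_2\cup\cdots\cup X_k$ would split $X$ into two disjoint nonempty closed sets, making $X_1$ a proper clopen subset and contradicting connectedness. Hence I may assume infinitely many of the $X_i$ are nonempty, and after discarding the empty ones, that every $X_i$ is nonempty.

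The heart of the proof is to build a nested sequence of subcontinua $X=C_0\supseteq C_1\supseteq C_2\supseteq\cdots$ with $C_k\cap X_k=\emptyset$ for every $k\ge 1$, each $C_k$ a nonempty compact connected set. Granting such a sequence, the finite intersection property of the compact space gives $\bigcap_{k\ge 1}C_k\neq\emptyset$; but a point $x$ in this intersection lies in $C_k$ for all $k$, hence lies in no $X_k$, contradicting $\bigcup_k X_k=X$. So everything reduces to the inductive construction.

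The inductive step rests on an extraction lemma: if $K$ is a continuum covered by pairwise disjoint closed sets, at least two nonempty, then for any designated nonempty piece $A$ there is a subcontinuum $C\subseteq K$ with $C\cap A=\emptyset$ that still meets at least two of the pieces. To prove it I would use that $K$ is normal (compact Hausdorff), separate $A$ from a second nonempty piece $B$ by disjoint open sets, and shrink so as to obtain an open $U\supseteq A$ with $\overline U\cap B=\emptyset$. Then $E=K\setminus U$ is a proper nonempty closed set disjoint from $A$ and containing $B$; I take $C$ to be the component of $E$ through a point of $B$, which is closed, hence compact, hence a genuine subcontinuum. The \emph{boundary bumping theorem} forces $C$ to meet $\overline{K\setminus E}=\overline U$, so it meets $\partial U$; since $\partial U$ avoids both $A$ and $B$, this contact point lies in a third piece, whence $C$ meets at least two pieces while missing $A$. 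Feeding this into the construction (at stage $k$ apply the lemma with $A=X_k$ if $C_{k-1}$ still meets $X_k$, otherwise keep $C_{k-1}$), the clause ``meets at least two pieces'' is precisely what revalidates the hypothesis of the lemma at the next stage, so the nested sequence is produced.

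The hardest ingredient is the boundary bumping theorem itself, namely that every component of a proper nonempty closed subset of a continuum meets the boundary of that subset. If I am allowed to invoke it as a standard fact of continuum theory, the rest is routine bookkeeping; if not, the genuinely technical work is to prove it, most cleanly through the coincidence of components and quasicomponents in compact Hausdorff spaces.
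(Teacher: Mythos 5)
Your proposal is correct. Note, however, that the paper does not prove this statement at all: Sierpinski's theorem is quoted as a known black box, with references to Sierpinski's original 1918 paper and to Kuratowski's \emph{Topology}, and is then applied in Step 4 of the proof of Theorem \ref{T:bisymmetryimpliescontinuity2} to rule out the case of countably infinitely many equivalence classes. So there is no in-paper argument to compare against; what you have written is essentially the classical textbook proof (the one found in Kuratowski and in Nadler's \emph{Continuum Theory}): reduce the finite case to a clopen decomposition, and in the infinite case build a nested sequence of subcontinua $C_k$ with $C_k\cap X_k=\emptyset$ via the boundary bumping lemma, then contradict the covering property at a point of $\bigcap_k C_k$. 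Your extraction lemma is stated and justified correctly (normality gives $U\supseteq A$ with $\overline U\cap B=\emptyset$; the component of $K\setminus U$ through a point of $B$ must reach $\partial U$, which misses both $A$ and $B$, so it meets a second piece), and the bookkeeping that re-establishes the ``meets at least two pieces'' hypothesis at each stage is exactly what makes the induction close. You are right that the only genuinely non-elementary ingredient is boundary bumping itself, whose standard proof rests on the coincidence of components and quasicomponents in compact Hausdorff spaces; since the paper treats the whole theorem as citable, invoking that lemma as a standard fact is entirely in the spirit of the source.
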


    \noindent In our case this implies that one of the equivalence classes must be the whole interval $I$.
    \smallskip
    \item $\kappa >\aleph_0$: In this case we take $\{F(I_{\alpha\beta},I_j): j<\kappa\}$. By Step 2 and by Step 3, for all $j<\kappa$ the sets $F(I_{\alpha\beta},I_j)$ are contained in equivalence classes that are pairwise disjoint and proper intervals. So we can find uncountably many disjoint, proper intervals in $\mathbb{R}$, which is impossible.
\end{enumerate}
Thus we get that every point of $I$ is in one equivalence class, hence $F$ is symmetric on  $I$. In particular, $F$ a is quasi-arithmetic mean on the compact, proper interval $I$.

\textbf{Step 5:} \emph{If $F$ is a quasi-arithmetic mean on every compact subinterval of an arbitrary interval $I$, then it is a quasi-arithmetic mean on $I$.}

The proof is based on a standard compact exhaustion argument. The interested reader is referred to the proof of Theorem 1 in \cite[p. 287]{Aczel1989}.

This finishes the 
proof of Theorem \ref{T:bisymmetryimpliescontinuity2}. 

\section{Concluding remarks}
One of the main goal concerning the characterization of bisymmetric operations without any regularity assumption is formalized in Open Problem \ref{op1}, which ask whether bisymmetry with strict increasingness would imply continuity. In the first joint paper of the authors (\cite{BKSZ2021}) there have been proved that this is true if the operation is also symmetric and reflexive (see Theorem \ref{T:bisymmetryimpliescontinuity}). Following Acz\'el's idea in the investigation of bisymmetric, strictly increasing maps (\cite{Aczel1948}) the next step would be to verify the case where symmetry condition is not assumed.

\begin{open}
Is it true or not that every bisymmetric, partially
strictly increasing, reflexive map is automatically continuous?
\end{open}

At this moment we do not know the exact answer to this question, although we believe that it is true. In this direction our present investigation would be an initial step by showing the dichotomy of symmetry of bisymmetric, strictly increasing, reflexive operations.

Furthermore, it is important to note that reflexivity in the proof of Theorem \ref{T:bisymmetryimpliescontinuity2} have not been used centrally, just implicitly in Lemma \ref{l1}. This observation leads to the following open question.

\begin{open}
Is it true or not that every bisymmetric, partially
strictly increasing, symmetric map is automatically continuous?
\end{open}

If the answer is affirmative, then with its proof we may get the analogue of Lemma \ref{l1} without reflexivity. 
Moreover, it would automatically imply the analogues of Theorem \ref{T:bisymmetryimpliescontinuity2} and the dichotomy result Corollary \ref{cor1} without the assumption of reflexivity.

\section*{Acknowledgement}
Concerning the first author the research reported in this paper is part of project no. BME-NVA-02, implemented with the support provided by the Ministry of Innovation and Technology of Hungary from the National Research, Development and Innovation Fund, financed under the TKP2021 funding scheme.

The second author was supported by Premium Postdoctoral Fellowship of the Hungarian Academy of Sciences and by the Hungarian National	Foundation for Scientific Research, Grant No. K124749.

The third author was supported by the Hungarian Academy of Sciences.

\end{document}